\numberwithin{equation}{section}
\newtheorem{thm}{Theorem}[section]
\newtheorem{lma}[thm]{Lemma}
\newtheorem{cor}[thm]{Corollary}
\renewcommand{\geq}{\geqslant}
\renewcommand{\leq}{\leqslant}
\newcommand{\Nbar}{\overline{N}}
\newcommand{\Cbar}{\overline{C}}
\newcommand{\Bbar}{\overline{B}}
\newcommand{\nbar}{\overline{n}}
\newcommand{\mbar}{\overline{m}}
\DeclareMathOperator{\R}{\mathbb{R}}
\DeclareMathOperator{\N}{\mathbb{N}}
\DeclareMathOperator{\E}{\mathbb{E}}
\title{The Assouad spectrum  of random self-affine carpets}
\author{Jonathan M.~Fraser\footnote{JMF was financially supported by the Leverhulme Trust Research
Fellowship RF-2016-500, the EPSRC Standard Grant EP/R015104/1, and the University of Waterloo.} 
\and Sascha Troscheit\footnote{ST was financially supported by NSERC Grants 2014-03154 and
2016-03719, and the University of Waterloo.}}
\begin{document}

\maketitle

\vspace{-5mm}
\begin{center}
	$^*$Mathematical Institute, University of St Andrews, North Haugh, St Andrews, Fife, KY16 9SS, UK.\\
	E-mail: jmf32@st-andrews.ac.uk
	\\ \vspace{3mm}
	$^\dagger$Department of Pure Mathematics, University of Waterloo, Waterloo, Ont., N2L 3G1,
	Canada.
	\\
	E-mail: stroscheit@uwaterloo.ca
	\end{center}

\begin{abstract}
We derive the almost sure Assouad spectrum and quasi-Assouad dimension of random self-affine
Bedford-McMullen carpets.  Previous work has revealed that the (related) Assouad dimension is not sufficiently
sensitive  to distinguish between subtle changes in the random model, since it tends to be almost
surely `as large as possible' (a deterministic quantity). This has been verified in conformal and
non-conformal settings.  In the conformal setting, the Assouad spectrum and quasi-Assouad
dimension  behave rather differently, tending to almost surely coincide with the upper box
dimension.  Here we investigate the non-conformal setting and find that the Assouad spectrum and
quasi-Assouad dimension generally do not coincide with the box dimension or Assouad dimension.  We
provide examples highlighting the subtle differences between these notions.  Our proofs combine
deterministic covering techniques with suitably adapted Chernoff estimates and Borel-Cantelli type
arguments. \\

\emph{Mathematics Subject Classification} 2010:  primary: 28A80; secondary: 37C45.

\emph{Key words and phrases}: Assouad spectrum, quasi-Assouad dimension,  random self-affine carpet.
\end{abstract}

\section{Assouad spectrum and quasi-Assouad dimension}

The Assouad dimension is an important notion of dimension designed to capture extreme local scaling
properties of a given metric space.  Its distance from the upper box dimension, which measures
average global scaling, can be interpreted as a quantifiable measure of inhomogeneity. Motivated by
this idea, Fraser and Yu introduced the Assouad spectrum which is designed to interpolate between
the upper box dimension and Assouad dimension, and thus reveal more precise geometric information
about the set, see \cite{Spectraa}.  Here we recall the basic definitions and, for concreteness, we
consider  non-empty compact  sets $F \subseteq \mathbb{R}^d$, although the general theory extends
beyond this setting.  For a bounded set $E \subseteq \mathbb{R}^d$ and a scale $r>0$ we let $N(E,r)$
be the minimum number of sets of diameter $r$ required to cover $E$.  The Assouad dimension of $F$
is defined by
\[
\dim_\text{A} F \ = \  \inf \Bigg\{ \alpha \  : \ ( \exists  \, C) \, (\forall \, 0<r<R<1) \, (\forall x \in F) \  N\big( B(x,R) \cap F, r \big) \ \leq \ C \bigg(\frac{R}{r}\bigg)^\alpha  \Bigg\}.
\]
The Assouad spectrum is the function defined by
\[
\theta  \ \mapsto \ \dim_{\mathrm{A}}^\theta F \ = \  \inf \bigg\{ \alpha \  : \   (\exists C>0) \, (\forall 0<R<1) \,  (\forall x \in F) \   N \big( B(x,R) \cap F ,R^{1/\theta} \big) \ \leq \ C \left(\frac{R}{R^{1/\theta}}\right)^\alpha \bigg\}
\]
where  $\theta$ varies over  $(0,1)$.  The related quasi-Assouad dimension is defined by
\[
\dim_{\mathrm{qA}} F \ = \  \lim_{\theta \to 1} \inf \bigg\{ \alpha \  : \   (\exists C>0) \, (\forall 0<r \leq R^{1/\theta} \leq R< 1) \,  (\forall x \in F) \  N \big( B(x,R) \cap F ,r \big) \ \leq \ C \left(\frac{R}{r}\right)^\alpha \bigg\}
\]
and the upper box dimension is defined by
\[
\overline{\dim}_\mathrm{B} F \ = \  \inf \Bigg\{ \alpha \  : \ ( \exists  \, C) \, (\forall \, 0<R<1) \ N\big(  F, r \big) \ \leq \ C \bigg(\frac{1}{r}\bigg)^\alpha  \Bigg\}.
\]
These dimensions are all related, but their relative differences can be subtle.  We summarise some important facts to close this section.  For any $\theta \in (0,1)$ we have
\[
\overline{\dim}_\text{B} F \leq  \dim_{\mathrm{A}}^\theta F \leq \dim_{\mathrm{qA}} F  \leq \dim_{\mathrm{A}} F  
\]
and any of these inequalities can be strict.  Moreover, the Assouad spectrum is a continuous function of $\theta$ and also satisfies
\begin{equation} \label{spectrumbound}
 \dim_{\mathrm{A}}^\theta F  \leq \frac{\overline{\dim}_\text{B} F}{1-\theta}.
\end{equation}
We also note that for a given $\theta$ it is not necessarily true that the Assouad spectrum is given
by the expression after the limit in the definition of the quasi-Assouad dimension: this notion is
by definition monotonic in $\theta$ but the spectrum is not necessarily monotonic \cite[Section
8]{Spectraa}. However, it has recently been shown in \cite{ScottishCanadian} that $\dim_{\mathrm{qA}} F  = \lim_{\theta \to
1} \dim_{\mathrm{A}}^\theta F$ and, combining this with (\ref{spectrumbound}), we see that the
Assouad spectrum necessarily interpolates between the upper box dimension and the quasi-Assouad
dimension.

For more information, including basic properties, concerning  the upper box dimension, see
~\cite[Chapters 2-3]{Falconer}.  For the Assouad dimension, see \cite{Fraser14, Luukkainen,
Robinson}, the quasi-Assouad dimension, see \cite{Hare}, and for the Assouad spectrum, see
\cite{Spectraa, Spectrab, ScottishCanadian}.

\section{Self-affine carpets: random and deterministic}

In this paper we consider random self-affine carpets. More specifically, random 1-variable analogues
of the self-affine sets introduced by Bedford and McMullen in the 1980s.  In the deterministic
setting, the box dimensions were computed independently  by Bedford and McMullen \cite{Bedford,
McMullen} and the Assouad dimension was computed by Mackay \cite{Mackay}.  The Assouad spectrum was
computed by Fraser and Yu \cite{Spectrab}, and these results also demonstrated that the
quasi-Assouad and Assouad dimensions coincide by virtue of the spectrum reaching the Assouad
dimension.  In the random setting, the (almost sure) box dimensions were first computed by Gui and
Li \cite{GuiLi} for fixed subdivisions and by Troscheit~\cite{Troscheit18} in the most general
setting that we are aware of.
The (almost sure) Assouad dimension was computed by Fraser, Miao and Troscheit
\cite{FraserMiaoTroscheit}.  In this article we compute the quasi-Assouad dimension and the Assouad
spectrum in the random setting.  Unlike in the deterministic case, we find that the quasi-Assouad
dimension and Assouad dimension are usually almost surely distinct.
Further, the quasi-Assouad dimension is in general also distinct from the box  dimension.  This is in stark contrast to the conformal setting, where it was shown that the quasi-Assouad dimension (and thus Assouad spectrum) is almost surely equal to the upper box dimension (and distinct from the Assouad dimension), see \cite{Troscheit17}.

We close this section by describing our model. Let $\Lambda = \{ 1, \dots, |\Lambda |\}$ be a finite
index set and for each $i \in \Lambda$ fix integers $n_i >  m_i \geq 2$ and divide the unit square
$[0,1]^2$ into a uniform $m_i \times n_i$ grid.  For each $i \in \Lambda$ let $\mathcal{I}_i$ be a
non-empty subset of the set of $m_i^{-1} \times n_i^{-1}$ rectangles in the grid and let $N_i =
|\mathcal{I}_i|$.  Let $B_i$ be the number of distinct columns which contain rectangles from
$\mathcal{I}_i$, and $C_i$ be the maximum number of rectangles in $\mathcal{I}_i$ which are contained
in a particular column.  Note that $1 \leq B_i \leq m_i$,  $1 \leq C_i \leq n_i$ and $N_i \leq B_i
C_i$.  For each rectangle $j \in \mathcal{I}_i$, let $S_j$ be the unique orientation preserving
affine map which maps the unit square $[0,1]^2$ onto the rectangle $j$.

Let $\Omega = \Lambda^\mathbb{N}$ and for each  $\omega = (\omega_1, \omega_2, \dots) \in \Omega$, we are interested in the corresponding attractor
\[
F_\omega \, = \, \bigcap_{k \geq 1}\  \bigcup_{j_1\in \mathcal{I}_{\omega_1}, \dots, j_k \in\mathcal{I}_{\omega_k}} S_{j_1} \circ \dots \circ S_{j_k}\left( [0,1]^2 \right).
\]
By randomly choosing $\omega \in \Omega$, we randomly choose an attractor $F_\omega$ and we wish to make statements about the generic nature of $F_\omega$. For this, we need a measure on $\Omega$.  Let $\{p_i\}_{i \in \Lambda}$ be a set of probability weights, that is, for each $i \in \Lambda$, $0<p_i<1$ and $\sum_{i \in \Lambda} p_i = 1$.  We extend these basic probabilities to a Borel measure $\mathbb{P}$ on $\Omega$ in the natural way, which can be expressed as the infinite product measure
\[
\mathbb{P} = \prod_{k \in \mathbb{N}} \sum_{i \in \Lambda} p_i \delta_{i}\,\,,
\]
where $\Omega$ is endowed with the product topology and $\delta_i$ is a unit mass concentrated at $i \in \Lambda$.  Note that the deterministic model can be recovered if $|\Lambda | = 1$, that is, there is only one ``pattern" available, which is therefore chosen at every stage in the process. In this case, the  deterministic attractor is the unique non-empty set $F \subseteq [0,1]^2$ satisfying
\[
F = \bigcup_{j \in \mathcal{I}_1} S_j(F).
\]

\section{Results}

Our main result is an explicit formula which gives the Assouad spectrum of our random self-affine sets almost surely.  For simplicity we suppress summation over $i \in \Lambda$ to simple summation over $i$ throughout this section.

\begin{thm} \label{CarpetsSpectra}
For $\mathbb{P}$ almost  all $\omega \in \Omega$, we have
\[
  \dim_{\mathrm{A}}^\theta F_\omega \ = \  \left\{ \begin{array}{ccc}
\dfrac{1}{1- \theta}
  \left( \dfrac{\sum_{i  } p_i \log \left( B_iC_i^\theta N_i^{-\theta} \right) }{ \sum_{i  } p_i\log m_i}  +   \dfrac{\sum_{i  } p_i \log \left(N_iB^{-1}_iC_i^{-\theta}\right)}{\sum_{i  } p_i \log n_i} \right)  & \colon & 0< \theta \leq \dfrac{\sum_{i  } p_i  \log m_i}{\sum_{i  } p_i \log n_i} \\ \\
 \dfrac{\sum_{i  } p_i\log B_i }{\sum_{i  } p_i\log m_i}  +   \dfrac{\sum_{i  } p_i\log C_i}{\sum_{i
 } p_i\log n_i} &   \colon &  \dfrac{\sum_{i  } p_i  \log m_i}{\sum_{i  } p_i \log n_i}< \theta
 <1\,,
\end{array}  \right.
\]
where $F_\omega$ is the $1$-variable random Bedford-McMullen carpet associated with $\omega \in \Omega$.
\end{thm}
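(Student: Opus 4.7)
The plan is to adapt the Bedford--McMullen approximate-square technique to the $1$-variable random setting, supplementing it with exponential concentration estimates that control the random word $\omega$ at all scales simultaneously. For $\omega\in\Omega$ and a scale $R\in(0,1)$, introduce four stopping levels in the construction tree according to when the widths and heights of level-$k$ pieces reach $R$ or $r=R^{1/\theta}$: let $k(R,\omega)$ be defined by $\prod_{j\le k}m_{\omega_j}^{-1}\approx R$ and $\ell(R,\omega)$ by $\prod_{j\le \ell}n_{\omega_j}^{-1}\approx R$, and analogously $k(r,\omega),\ell(r,\omega)$ for $r$. Since $n_i>m_i$ we have $\ell(\cdot)<k(\cdot)$, and the strong law of large numbers gives
\[
k(R,\omega)\sim\frac{\log(1/R)}{\sum_i p_i\log m_i},\qquad \ell(R,\omega)\sim\frac{\log(1/R)}{\sum_i p_i\log n_i},
\]
almost surely. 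The critical threshold $\theta_0=\sum_i p_i\log m_i/\sum_i p_i\log n_i$ in the statement is precisely the value at which $\ell(r,\omega)=k(R,\omega)$; for $\theta<\theta_0$ the four levels interlace as $\ell(R)<k(R)<\ell(r)<k(r)$, giving an interpolation regime with a nontrivial ``middle'' stage, while for $\theta\ge\theta_0$ one has $\ell(R)<\ell(r)\le k(R)<k(r)$ and the middle stage collapses.

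For the upper bound, one covers $B(x,R)\cap F_\omega$ by descending through the construction tree in stages between consecutive stopping levels. At each stage the product of random per-level counts takes one of the forms $\prod B_{\omega_j}$, $\prod C_{\omega_j}$, $\prod (N_{\omega_j}/B_{\omega_j})$, or $\prod N_{\omega_j}$, according to whether the current approximate cells are being refined horizontally, vertically, or in both directions. Taking logarithms and dividing by $\log(R/r)=(1-1/\theta)\log R$ produces a covering exponent that, upon replacing each empirical mean $k^{-1}\sum_j f(\omega_j)$ by $\sum_i p_i f(i)$ via the law of large numbers, collapses precisely to the piecewise expression in the statement. The regime $\theta\ge\theta_0$ is simpler because the interpolation stage vanishes, leaving only the constant contribution $\sum_i p_i\log B_i/\sum_i p_i\log m_i+\sum_i p_i\log C_i/\sum_i p_i\log n_i$.

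For the matching lower bound, one locates, almost surely and at every scale $R$, a centre $x\in F_\omega$ realising the worst-case count. Following the deterministic blueprint, one constructs a vertical branch through ``heavy'' columns at each relevant level, namely columns achieving $C_{\omega_j}$ rectangles, and places $x$ at the resulting infinite path; the ball $B(x,R)$ then contains an almost-maximal packing of level-$k(R)$ pieces that each have to be subdivided to scale $r$, producing a lower bound which matches the upper bound after the same logarithmic manipulations. Because $\omega$ is i.i.d.\ with $p_i>0$ for all $i\in\Lambda$, the empirical counts along the selected branch still converge to their expected exponential rates, so nothing is lost by restricting to ``heavy'' configurations.

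The main technical obstacle is upgrading these asymptotic-in-$R$ statements to bounds that hold uniformly over all $R\in(0,1)$ and all centres $x\in F_\omega$, as the definition of the Assouad spectrum requires. This is handled by Chernoff tail bounds applied to the i.i.d.\ log-weights $\log m_{\omega_j},\log n_{\omega_j},\log B_{\omega_j},\log C_{\omega_j},\log N_{\omega_j}$, which give exponential concentration of each level-$k$ product around its expected growth. Taking a geometric sequence of scales $R_n=2^{-n}$, combining a union bound over the polynomially many candidate centres at each such scale with a Borel--Cantelli argument, and then interpolating between consecutive scales, yields the required simultaneous control. The delicate point is tracking the $O(\sqrt{\log(1/R)})$ fluctuations of $k(R,\omega),\ell(R,\omega)$ about their limits accurately enough that the breakpoint $\theta=\theta_0$ and the numerators $B_iC_i^\theta N_i^{-\theta}$, $N_iB_i^{-1}C_i^{-\theta}$ are identified correctly on both sides of the transition.
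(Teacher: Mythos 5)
Your proposal matches the paper's architecture: approximate $R$-squares, the four stopping levels determined by when $\prod m_{\omega_j}^{-1}$ and $\prod n_{\omega_j}^{-1}$ pass $R$ and $R^{1/\theta}$, the case split at $\theta_0 = \log\overline{m}/\log\overline{n}$ according to whether the middle stage of the covering decomposition survives, Chernoff bounds for the i.i.d.\ log-weights, Borel--Cantelli to upgrade to almost sure control over scales, and a worst-case approximate square built from maximal (``heavy'') columns for the lower bound. The per-level count products you write down are equivalent, after elementary algebra, to the paper's $\prod C_{\omega_l}$, $\prod N_{\omega_l}$, $\prod B_{\omega_l}$ over the three consecutive level windows.

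One point in your sketch is mistaken and worth correcting, although it does not sink the strategy. You propose ``a union bound over the polynomially many candidate centres at each such scale.'' The number of candidate approximate $R$-squares at scale $R$ is in fact exponential in $\log(1/R)$ (roughly $N_{\omega_1}\cdots N_{\omega_{k_1^\omega(R)}}$ of them), so a naive union bound over centres combined with a Chernoff probability would need a careful balance of rates that you have not verified. The paper sidesteps this entirely: the deterministic covering estimate for $N(Q\cap F_\omega, R^{1/\theta})$ in terms of the products $\prod C_{\omega_l}$, $\prod N_{\omega_l}$, $\prod B_{\omega_l}$ is already \emph{uniform} over all approximate $R$-squares $Q$ at that scale, because $C_i$, $B_i$ are defined as column maxima and counts. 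The only randomness lies in the products themselves, which depend on $\omega$ but not on the choice of $Q$, so the union bound needed is merely over the countably many level windows (parametrised by $q=k_2^\omega(R)$ together with the boundedly many admissible values of $k_1^\omega(R)$), which is what makes the Borel--Cantelli sum converge cleanly. Relatedly, your mention of $O(\sqrt{\log(1/R)})$ fluctuations is misleading: what is actually used is Chernoff-type exponential tail control at the $\varepsilon k$ deviation scale, not CLT-scale fluctuations.
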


As an immediate consequence of Theorem \ref{CarpetsSpectra} we obtain a formula for the quasi-Assouad dimension which holds almost surely. 

\begin{cor} \label{CarpetsQuasi}
For $\mathbb{P}$ almost  all $\omega \in \Omega$, we have
\[
\dim_{\mathrm{qA}}F_\omega \ = \   \frac{\sum_{i } p_i\log B_i }{\sum_{i } p_i\log m_i}  +
\frac{\sum_{i } p_i\log C_i}{\sum_{i } p_i\log n_i}\,,
\]
where $F_\omega$ is the $1$-variable random Bedford-McMullen carpet associated with $\omega \in \Omega$.
\end{cor}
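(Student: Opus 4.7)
The plan is to combine Theorem~\ref{CarpetsSpectra} with the identity $\dim_{\mathrm{qA}} F = \lim_{\theta \to 1^{-}} \dim_{\mathrm{A}}^\theta F$ from \cite{ScottishCanadian}, recalled in the introduction. This reduces the corollary to reading off the large-$\theta$ behaviour of the explicit formula for the spectrum.

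First I would verify that the hypothesis $n_i > m_i \geq 2$ imposed in the model guarantees
\[
\theta_0 \ := \ \frac{\sum_{i} p_i \log m_i}{\sum_{i} p_i \log n_i} \ \in \ (0,1),
\]
so that the open interval $(\theta_0,1)$ on which the second branch of the piecewise formula in Theorem~\ref{CarpetsSpectra} applies is non-empty. Next, I would fix any $\omega$ in the full-measure event supplied by Theorem~\ref{CarpetsSpectra} and note that, for every $\theta \in (\theta_0, 1)$, the Assouad spectrum $\dim_{\mathrm{A}}^\theta F_\omega$ equals the $\theta$-independent constant
\[
\frac{\sum_{i} p_i \log B_i}{\sum_{i} p_i \log m_i} \ + \ \frac{\sum_{i} p_i \log C_i}{\sum_{i} p_i \log n_i}.
\]
Letting $\theta \to 1^{-}$ and invoking the above identity gives the advertised expression for $\dim_{\mathrm{qA}} F_\omega$ on the same full-measure event. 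There is essentially no obstacle: the entire content sits in the theorem, and the only point to check is that $\theta_0<1$, which is automatic from the model.
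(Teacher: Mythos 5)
Your proposal is correct and follows exactly the paper's own argument: the paper likewise deduces the corollary immediately from Theorem~\ref{CarpetsSpectra} together with the fact that $\dim_{\mathrm{A}}^\theta E \to \dim_{\mathrm{qA}} E$ as $\theta \to 1$ from \cite[Corollary 2.2]{ScottishCanadian}. Your extra check that $\theta_0 < 1$ (automatic from $n_i > m_i$) is a harmless bit of added care.
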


\begin{proof}
This follows immediately from Theorem \ref{CarpetsSpectra} and the fact that 
$\dim_{\mathrm{A}}^\theta E \to \dim_{\mathrm{qA}} E $ as $\theta \to 1$ for any set
$E\subseteq{\R^d}$, see
\cite[Corollary 2.2]{ScottishCanadian}.
\end{proof}

Note that the result in \cite{FraserMiaoTroscheit} states that for $\mathbb{P}$ almost  all $\omega \in \Omega$, we have
\[
\dim_\textup{A} F_\omega \  =  \  \max_{i\in\Lambda} \, \frac{\log B_i}{\log m_i} \, + \, \max_{i\in\Lambda} \,  \frac{\log C_i}{\log n_i}.
\]
Therefore Corollary \ref{CarpetsQuasi} demonstrates the striking difference between the Assouad and
quasi-Assouad dimensions in the random setting. In particular the almost sure value of the Assouad
dimension does not depend on the weights $\{p_i\}_{i \in \Lambda}$, but the almost sure value of the
quasi-Assouad dimension depends heavily on the weights.  The almost sure value of the Assouad
dimension is also extremal in the sense that it is the maximum over all realisations $\omega \in
\Omega$, whereas the quasi-Assouad dimension is an average.  Recall that the quasi-Assouad and
Assouad dimensions always  coincide for deterministic self-affine carpets, see  \cite{Spectrab}.

It is worth noting that the Assouad dimension of the random attractor is at least the maximal
Assouad dimension of the deterministic attractors, whereas the quasi-Assouad dimension is bounded
above by the dimension of the individual attractors. That is, letting $\underline{i}=(i,i,i,\dots)$, 
\[
  \dim_{\mathrm{A}} F_\omega \geq \max_{i\in\Lambda}\dim_{\mathrm{A}} F_{\underline
  i}\;\;(a.s.)\quad\text{and}\quad
  \dim_{\mathrm{qA}} F_\omega \leq \max_{i\in\Lambda}\dim_{\mathrm{qA}} F_{\underline i}\;\;(a.s.).
\]
Typically these inequalities are strict and it is further possible that 
$\dim_{\mathrm{qA}} F_\omega < \min_{i\in\Lambda}\dim_{\mathrm{qA}} F_{\underline i}$\,, almost
surely, see Figure~\ref{fig:spectrum} and the example in Section~\ref{sect:extremeExample}.

Finally, note that the almost sure values of the Assouad and quasi-Assouad dimensions coincide if
and only if there exists $\alpha, \beta \in (0,1]$ such that for all $i \in \Lambda$ we have
$\frac{\log B_i}{\log m_i} = \alpha$ and $\frac{\log C_i}{\log n_i} = \beta$.  This follows by
considering   `weighted mediants'.  In particular, the two terms giving the quasi-Assouad dimension
are weighted mediants of the fractions $\frac{\log B_i}{\log m_i}$ and $\frac{\log C_i}{\log n_i}$
respectively.  It is well-known that weighted mediants are equal to the maximum if and only if all
the fractions coincide.  In particular, coincidence of all of the deterministic Assouad (and quasi-Assouad) 
dimensions is \emph{not} sufficient to ensure almost sure coincidence of the Assouad and
quasi-Assouad dimensions in the random case.

Simple algebraic manipulation yields the following random analogue of \cite[Corollary
3.5]{Spectrab}.  In particular, the random variable $\dim_{\mathrm{A}}^\theta F_\omega$ can be
expressed in terms of the random variables $ \overline{\dim}_\mathrm{B} F_\omega $ and  $\dim_\mathrm{qA} F_\omega $.

\begin{cor}
 For $\mathbb{P}$ almost  all $\omega \in \Omega$, we have
\[
  \dim_{\mathrm{A}}^\theta F_\omega \ = \   \min \left\{ \frac{   \overline{\dim}_\mathrm{B}
  F_\omega \ -  \ \theta \, \left(\dim_\mathrm{qA} F_\omega  - \left( \dim_\mathrm{qA} F_\omega -
  \overline{\dim}_\mathrm{B} F_\omega\right) \frac{\sum_{i  } p_i  \log n_i}{\sum_{i  } p_i \log m_i} \right) }{1- \theta} , \ \dim_\mathrm{qA} F \right\}
\]
where $F_\omega$ is the $1$-variable random Bedford-McMullen carpet associated with $\omega \in \Omega$.
\end{cor}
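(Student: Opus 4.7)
The plan is to obtain the formula by a direct algebraic manipulation of the two branches of Theorem~\ref{CarpetsSpectra}, reinterpreting them via Corollary~\ref{CarpetsQuasi} together with the almost sure box dimension formula
\[
\overline{\dim}_\mathrm{B} F_\omega \ = \ \frac{\sum_i p_i \log B_i}{\sum_i p_i \log m_i} \ + \ \frac{\sum_i p_i \log(N_i/B_i)}{\sum_i p_i \log n_i}
\]
established for the present model in \cite{Troscheit18}. Set $\theta^{*} := \frac{\sum_i p_i \log m_i}{\sum_i p_i \log n_i}$, so that $1/\theta^{*}$ is exactly the ratio $\frac{\sum_i p_i \log n_i}{\sum_i p_i \log m_i}$ appearing inside the minimum in the statement, and so that the two branches of Theorem~\ref{CarpetsSpectra} correspond precisely to the regimes $\theta \leq \theta^{*}$ and $\theta > \theta^{*}$. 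All identities below are understood to hold for $\mathbb{P}$-almost every $\omega$.

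For $\theta \leq \theta^{*}$, I would expand the logarithms in the first branch of Theorem~\ref{CarpetsSpectra} and separate each sum into its $B_i$, $C_i$, and $N_i$ parts. Using $\log B_i + \log(N_i/B_i) = \log N_i$, the $\theta$-independent contributions combine to exactly $\overline{\dim}_\mathrm{B} F_\omega$. The remaining contributions all carry a factor of $\theta$; converting the terms with denominator $\sum_i p_i \log m_i$ to the common denominator $\sum_i p_i \log n_i$ via the identity $\frac{1}{\sum_i p_i \log m_i} = \frac{1}{\theta^{*}\sum_i p_i \log n_i}$ and substituting $\dim_\mathrm{qA} F_\omega$ and $\overline{\dim}_\mathrm{B} F_\omega$, these collapse to $\frac{1}{\theta^{*}}(\dim_\mathrm{qA} F_\omega - \overline{\dim}_\mathrm{B} F_\omega) - \dim_\mathrm{qA} F_\omega$. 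This gives
\[
(1-\theta)\dim_\mathrm{A}^\theta F_\omega \ = \ \overline{\dim}_\mathrm{B} F_\omega - \theta\!\left(\dim_\mathrm{qA} F_\omega - \bigl(\dim_\mathrm{qA} F_\omega - \overline{\dim}_\mathrm{B} F_\omega\bigr)\tfrac{1}{\theta^{*}}\right),
\]
which is precisely the first entry of the minimum.

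For $\theta > \theta^{*}$, the task is to verify that this first entry is at least $\dim_\mathrm{qA} F_\omega$, so that the minimum is attained by $\dim_\mathrm{qA} F_\omega$ and matches the constant second branch of Theorem~\ref{CarpetsSpectra}. Clearing the positive denominator $1-\theta$ reduces this to
\[
(\theta - \theta^{*})\bigl(\dim_\mathrm{qA} F_\omega - \overline{\dim}_\mathrm{B} F_\omega\bigr) \ \geq \ 0,
\]
which holds because $\dim_\mathrm{qA} F_\omega - \overline{\dim}_\mathrm{B} F_\omega = \frac{\sum_i p_i \log(B_i C_i / N_i)}{\sum_i p_i \log n_i} \geq 0$ by the standing assumption $N_i \leq B_i C_i$. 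I do not anticipate a serious obstacle: the entire argument is bookkeeping with $p$-weighted log-averages, and continuity of $\theta \mapsto \dim_\mathrm{A}^\theta F_\omega$ ensures the two descriptions also agree at the breakpoint $\theta = \theta^{*}$, which can moreover be verified directly from the rearrangement above.
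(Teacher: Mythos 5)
Your proposal is correct and is precisely the ``simple algebraic manipulation'' the paper alludes to without spelling out: expand the first branch of Theorem~\ref{CarpetsSpectra}, regroup the $\theta$-free and $\theta$-linear parts using the almost sure box-dimension formula from \cite{Troscheit18} and the quasi-Assouad formula from Corollary~\ref{CarpetsQuasi}, and then check that the first entry of the minimum dominates $\dim_\mathrm{qA}F_\omega$ exactly when $\theta>\theta^*$ via $N_i\leq B_iC_i$. This matches the paper's (unwritten) argument in both route and ingredients, and your sign check for the minimum is a genuine, if small, detail the paper leaves to the reader.
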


Note that \cite[Corollary 3.5]{Spectrab} is formulated using the Assouad dimension instead of the
quasi-Assouad dimension (although they are equal in the deterministic case).  Our result shows that
the quasi-Assouad dimension is really  the `correct' notion to use here.

\subsection{Generic Example}
\label{sect:genericExample}
For illustrative purposes, we exhibit a representative example and provide pictures of the random and deterministic carpets along with their spectra.
Let $\Lambda=\{1,2\}$ and $\mathbb{P}$ be the $1/2$--$1/2$ Bernoulli probability measure on
$\Omega=\Lambda^{\N}$. That is, we consider two iterated function systems that we choose with equal
probability. 

The first iterated function system consists of $N_1=20$ maps, where the unit square is divided into
$m_1=19$ by $n_1=21$ rectangles. There are $B_1=10$ columns containing at least one rectangle and the
maximal number  of rectangles in a particular column is $C_1=8$. For the attractor of this deterministic
Bedford-McMullen carpet we obtain:
\[
  \dim_{\mathrm{A}}F_{\underline{1}}=\dim_{\mathrm{qA}}F_{\underline{1}}=\frac{\log10}{\log19}+\frac{\log8}{\log21}\approx1.465
  \quad\text{and}\quad
  \overline{\dim}_{\mathrm{B}}F_{\underline{1}}=\frac{\log10}{\log19}+\frac{\log2}{\log21}\approx1.010
\]
and the spectrum interpolates between these two values with a phase transition at $\log19 / \log21 \approx 0.967$.
The spectrum is plotted in Figure~\ref{fig:spectrum} and the attractor is shown in
Figure~\ref{FigureDet1}.

\begin{figure}[tb]
  \begin{center}
    \begin{overpic}[width=35em]{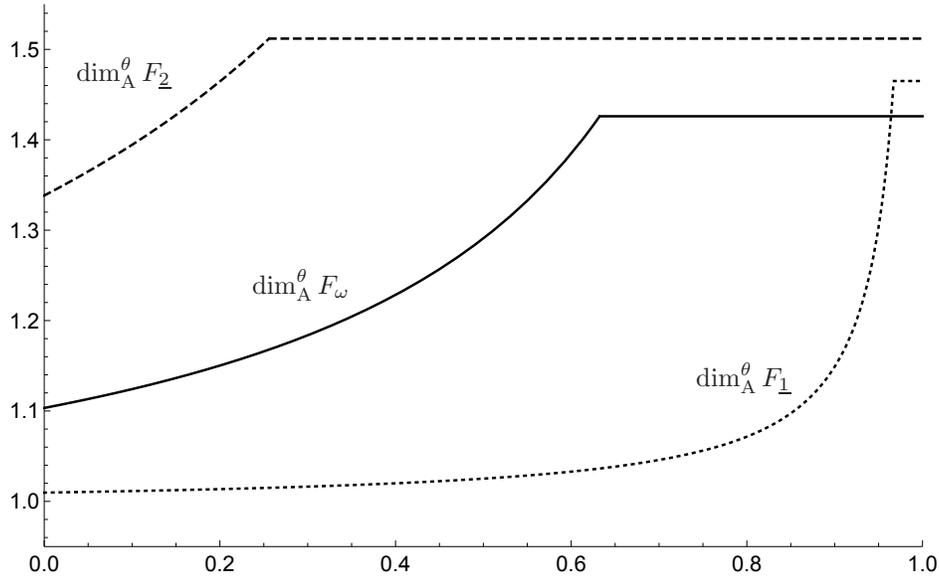}
      \put(26,30){$\dim_{\mathrm{A}}^\theta F_\omega$}
      \put(7,53){$\dim_{\mathrm{A}}^\theta F_{\underline{2}}$}
      \put(74,20){$\dim_{\mathrm{A}}^\theta F_{\underline{1}}$}
    \end{overpic}
\end{center}
\caption{The Assouad spectra of the sets in the example of Section~\ref{sect:genericExample}.  The deterministic spectra are shown in dashed lines and the almost sure spectrum in the random case is given by a solid line.}
  \label{fig:spectrum}
\end{figure}

{\setlength{\fboxsep}{0pt}\setlength{\fboxrule}{0.5pt}
\begin{figure}[tb]
  \begin{center}
    \fbox{\includegraphics[width=14em]{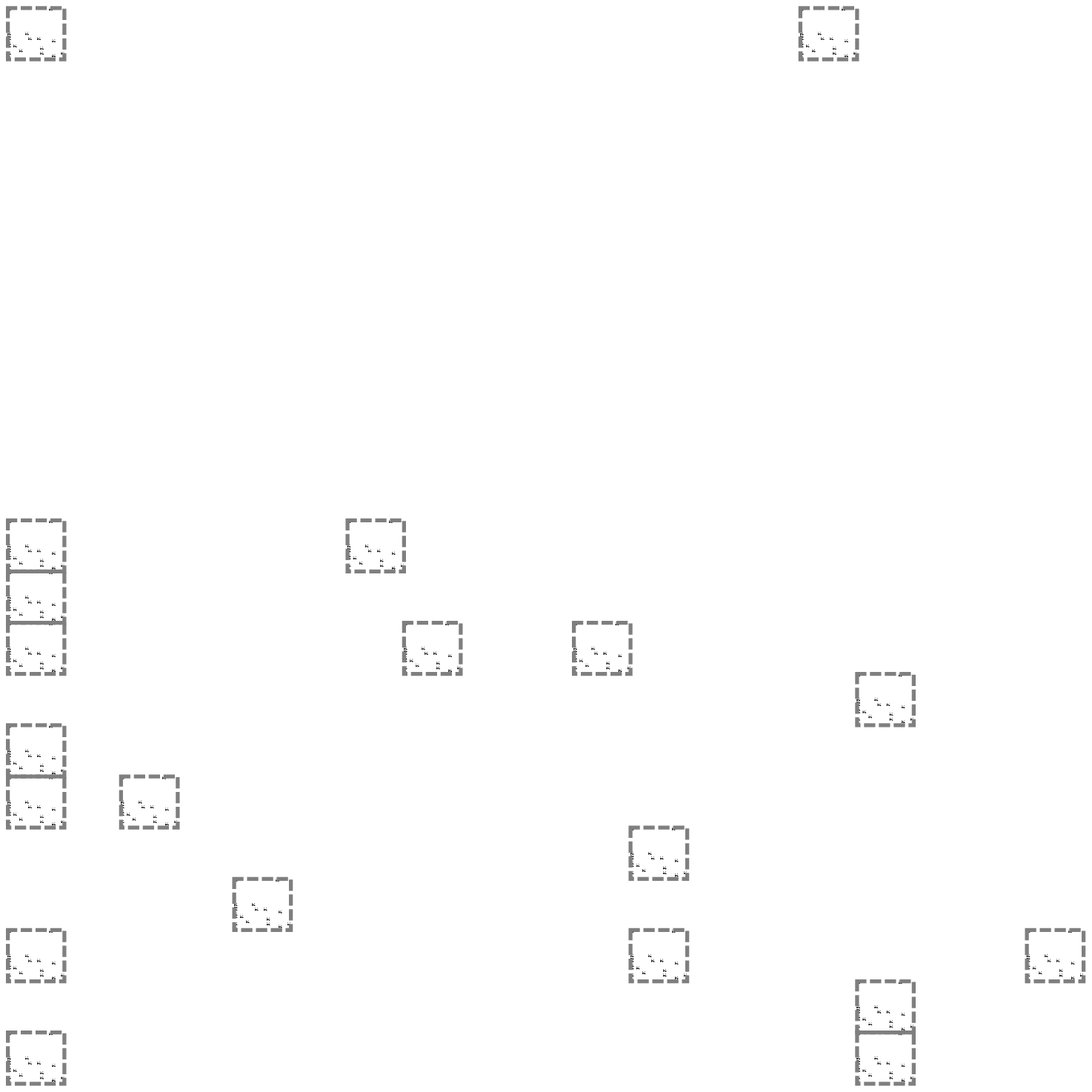}}
    \fbox{\includegraphics[width=14em]{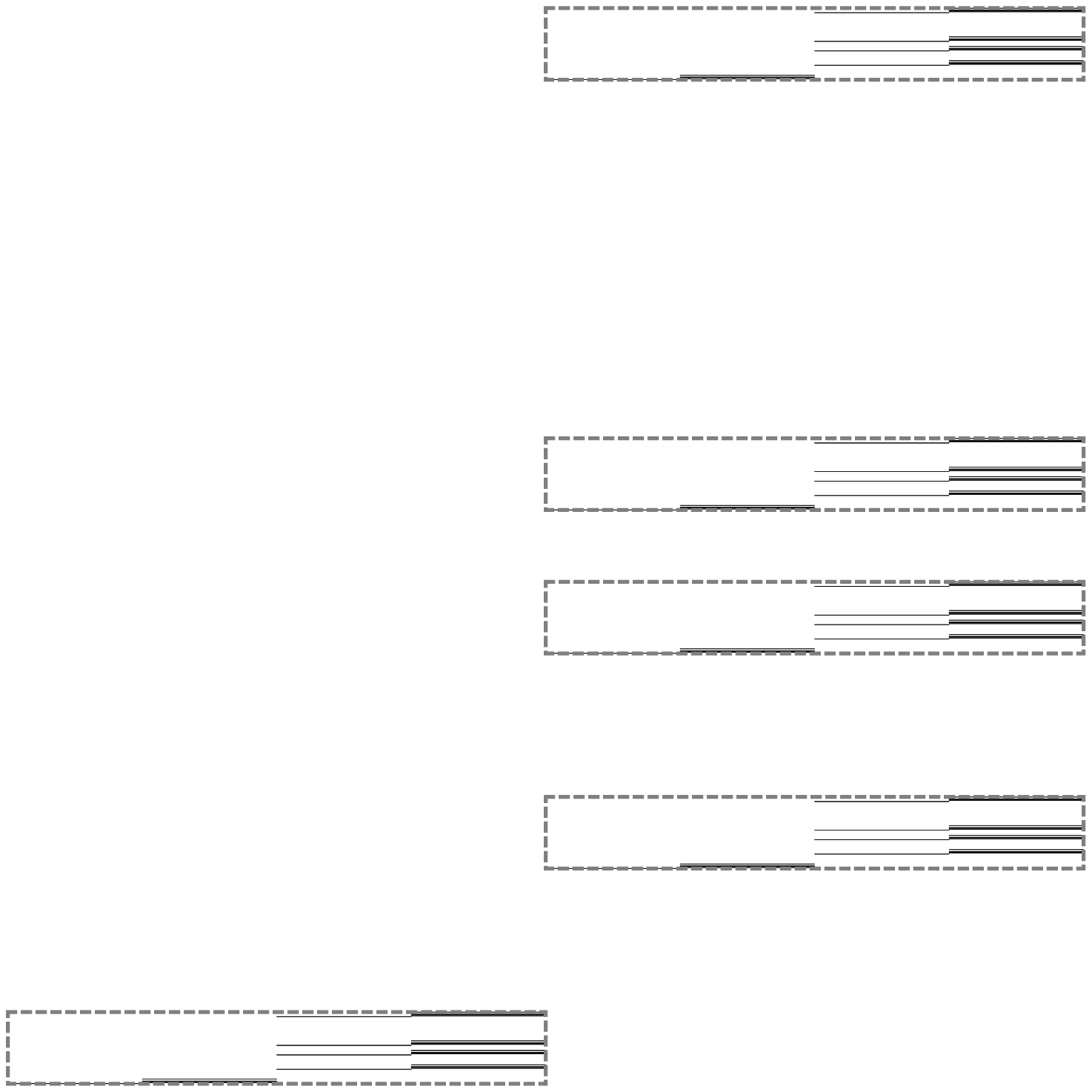}}
    \fbox{\includegraphics[width=14em]{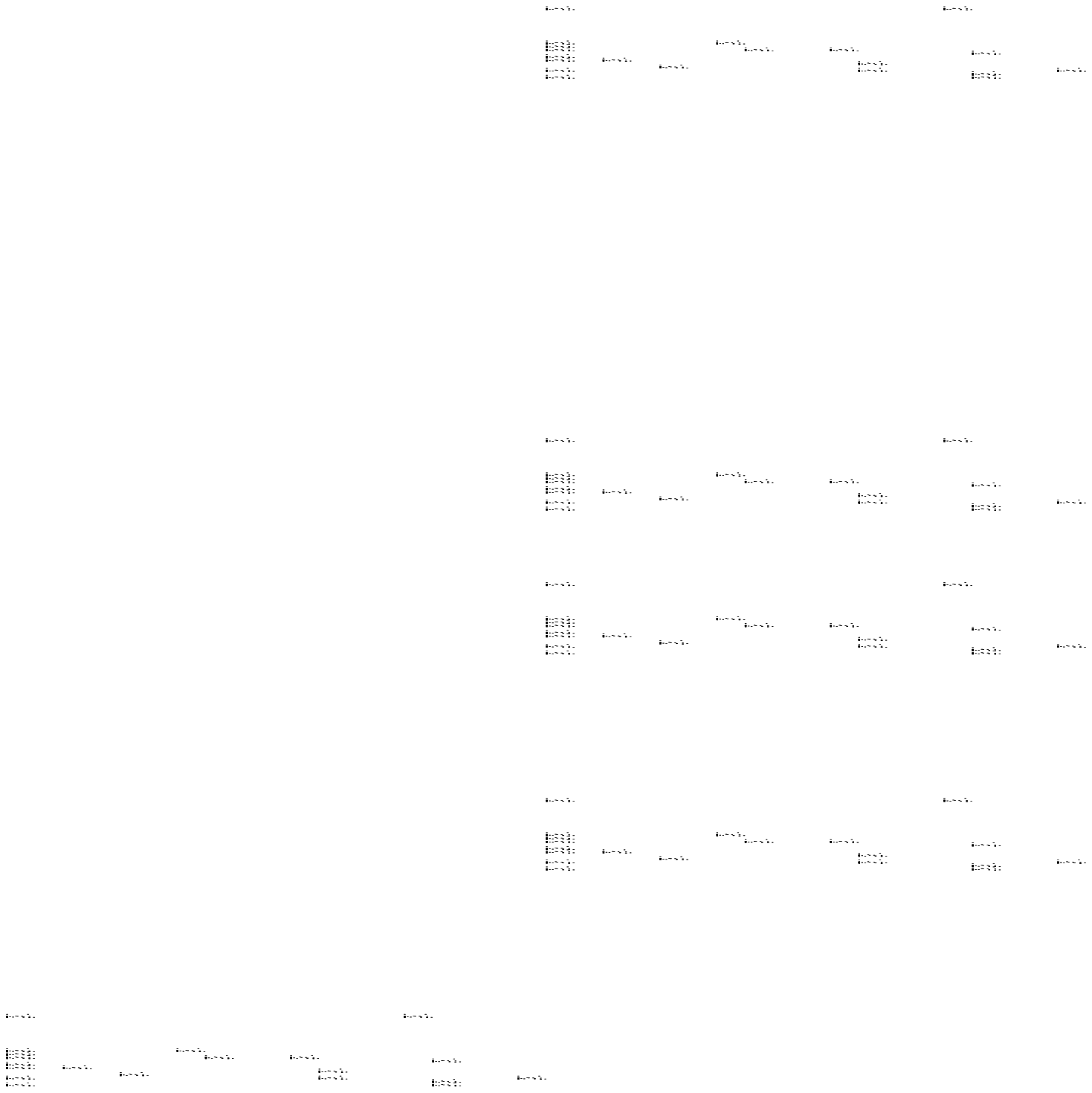}}
\end{center}
\caption{The attractors $F_{\underline{1}}$, $F_{\underline{2}}$, and $F_{\omega}$ for
$\omega=(2,1,1,2,1,\dots)$ as used in the example in Section~\ref{sect:genericExample}.}
  \label{FigureDet1}
\end{figure}
}

The second iterated function system consists of $N_2=5$ maps, where the unit square is divided into
$m_2=2$ by $n_2=15$ rectangles. There are $B_2=2$ columns containing at least one rectangle  and the maximal number of rectangles in a particular column is $C_2=4$. For the attractor of this deterministic Bedford-McMullen
carpet we obtain:
\[
  \dim_{\mathrm{A}}F_{\underline{2}}=\dim_{\mathrm{qA}}F_{\underline{2}}=1+\frac{\log4}{\log15}\approx1.512
  \quad\text{and}\quad
  \overline{\dim}_{\mathrm{B}}F_{\underline{2}}=1+\frac{\log5/2}{\log15}\approx1.338
\]
and the spectrum interpolates between these two values with a phase transition at $\log2 / \log15 \approx 0.256$.
The spectrum is plotted in Figure~\ref{fig:spectrum} and the attractor is shown in
Figure~\ref{FigureDet1}.

Our results now give the following values for almost every $\omega\in\Omega$:
\[
  \dim_{\mathrm{A}}F_\omega=1+\frac{\log8}{\log21}\approx 1.683
  ,\quad
  \dim_{\mathrm{qA}}F_\omega=\frac{\log20}{\log38}+\frac{\log32}{\log315}\approx1.426,
\]
\[
  \text{and}\quad
  \overline{\dim}_{\mathrm{B}}F_\omega=\frac{\log20}{\log38}+\frac{\log5}{\log315}\approx1.103.
\]
We note that in this example the almost sure value of the Assouad dimension exceeds that of the
individual attractors, the almost sure quasi-Assouad dimension is less than the quasi-Assouad
dimensions of the individual attractors, and that the phase transition in the spectrum occurs at
$\log38/\log315\approx0.632$.

\subsection{An extreme example}
\label{sect:extremeExample}

By constructing explicit examples, we demonstrate the following interesting phenomenon, which
highlights the subtle difference between the quasi-Assouad and Assouad dimensions.  For all
$\varepsilon \in (0,1)$, there exist two deterministic self-affine carpets $E,F$ with
$\overline{\dim}_\textup{B} E = \overline{\dim}_\textup{B} F = \dim_\textup{qA} E = \dim_\textup{qA}
F = \dim_\textup{A} E = \dim_\textup{A} F = 1$ such that when one mixes the two constructions by
randomising as above, one finds that almost surely
\[
\dim_\textup{qA} F_\omega \leq \varepsilon< 2 = \dim_\textup{A} F_\omega.
\]
Let $\varepsilon \in (0,1)$, $\Lambda = \{1,2\}$, $m_1=2$, $n_1 = n$, $m_2=m$, $n_2 = m+1$, where
$m,n$ are large integers which will be chosen later depending only on $\varepsilon$. Let
$\mathcal{I}_1$ consist of both rectangles from a particular row in the first grid and
$\mathcal{I}_2$ consist of all $(m+1)$  rectangles in a particular column of the second grid.  The
deterministic carpets associated with these systems are both unit line segments: a horizontal line
in the first case and a vertical line in the second.  Therefore both have all the dimensions we
consider being equal to 1. Let $p_1=p_2=1/2$, although the precise choice of weights is not
particularly important. It follows that for $\mathbb{P}$ almost all $\omega \in \Omega$ we have
\[
\dim_{\mathrm{qA}}F_\omega \ = \  \frac{(1/2) \log 2 + (1/2) \log 1 }{(1/2)\log 2+ (1/2) \log m}  +
\frac{(1/2) \log 1 + (1/2) \log ( m+1)}{(1/2)\log n+ (1/2) \log (m+1)} = \frac{\log 2 }{\log (2 m)}
+   \frac{\log (m+1)}{\log n(m+1)}.
\]
Choose $m$ sufficiently large to ensure that $ \frac{\log 2 }{\log (2 m)} \leq \varepsilon/2$ and,
now that $m$ is fixed, choose $n$ sufficiently large to ensure that $\frac{\log (m+1)}{\log n(m+1)}
\leq \varepsilon/2$.  The main result in  \cite[Theorem 3.2]{FraserMiaoTroscheit} gives that for any choice of
$m,n \geq 2$, $\dim_{\mathrm{A}}F_\omega = 2$ almost surely, and therefore the desired result
follows.

\section{Proofs}

\subsection{Approximate squares}

In this section we introduce (random) approximate squares, which are a common object in the study of
self-affine carpets. Fix  $\omega = (\omega_1, \omega_2, \dots) \in \Omega$,  $R \in (0,1)$ and let
$k_1^\omega(R)$ and $k_2^\omega(R)$ be the unique positive integers satisfying
\begin{equation} \label{k1def}
\prod_{l=1}^{k_1^\omega(R)} m_{\omega_l}^{-1} \, \leq \,  R \, <  \, \prod_{l=1}^{k_1^\omega(R)-1} m_{\omega_l}^{-1}
\end{equation}
and
\begin{equation} \label{k2def}
\prod_{l=1}^{k_2^\omega(R)} n_{\omega_l}^{-1} \, \leq \,  R \, <  \, \prod_{l=1}^{k_2^\omega(R)-1}
n_{\omega_l}^{-1},
\end{equation}
respectively. Also let
\[
m_{\max} = \max_{i \in \Lambda} \,  m_i \qquad \text{and} \qquad n_{\max} = \max_{i \in \Lambda} \,  n_i.
\]
A rectangle $[a,b] \times [c,d] \subseteq [0,1]^2$ is called an \emph{approximate $R$-square} if it is of the form
\[
S \big( [0,1]^2 \big) \cap \Big( \pi_1 \big( T\big( [0,1]^2 \big) \big) \times [0,1] \Big),
\]
where $\pi_1 : (x,y) \mapsto x$ is the projection onto the first coordinate and
\[
S \ = \ S_{i_1} \circ \cdots \circ S_{ i_{k_2^\omega(R)}}
\]
and
\[
T \ = \ S_{ i_1} \circ \cdots \circ S_{ i_{k_1^\omega(R)}},
\]
for some common sequence $i_1, i_2, \dots $ with $i_j \in \mathcal{I}_{\omega_j}$ for all $j$. Here we say $Q$ is \emph{associated with the sequence} $i_1, i_2, \dots $, noting that the entries $i_1, i_2, \dots ,  i_{k_1^\omega(R)}$ determine $Q$.  In particular, the base
\[
b-a  \ =  \ \prod_{i=1}^{k_1^\omega(R)} m_{\omega_i}^{-1} \ \in \ (m_{\max}^{-1}R , R] \qquad \qquad \text{by (\ref{k1def})}
\]
and the height
\[
d-c  \ =  \ \prod_{i=1}^{k_2^\omega(R)} n_{\omega_i}^{-1} \ \in \ (n_{\max}^{-1}R , R] \qquad \qquad
\text{by (\ref{k2def})}\,,
\]
and so approximate $R$-squares are indeed approximately  squares with base and height uniformly comparable to $R$, and therefore each other.

\subsection{Proof strategy and notation}
\label{sect:proofStrategy}

In order to simplify the exposition of  our proofs, we define the following weighted geometric
averages of the important parameters:
\[
\Nbar = \prod_{i \in \Lambda} N_i^{p_i} \qquad \Bbar = \prod_{i \in \Lambda} B_i^{p_i} \qquad \Cbar = \prod_{i \in \Lambda} C_i^{p_i} \qquad \mbar = \prod_{i \in \Lambda} m_i^{p_i}\qquad \nbar = \prod_{i \in \Lambda} n_i^{p_i}.
\]
Using this notation, in order to prove our result it is sufficient to prove the following two statements:
\begin{enumerate}
\item[(1)] For all $\log \mbar/\log \nbar< \theta <1  $ we have that for $\mathbb{P}$ almost all $\omega \in \Omega$
\[
\dim_{\mathrm{A}}^\theta F_\omega \ \leq  \    \frac{\log \Bbar }{\log \mbar}  +   \frac{\log \Cbar}{\log \nbar}.
\]
\item[(2)] For all $0<\theta < \log \mbar/\log \nbar $  we have that for $\mathbb{P}$ almost all $\omega \in \Omega$
\[
\dim_{\mathrm{A}}^\theta F_\omega \ = \   \frac{ 1}{1- \theta} \left( \frac{\log \Bbar }{\log \mbar}
+   \frac{\log \Nbar/\Bbar}{\log \nbar} \right)  \ -  \ \frac{ \theta}{1- \theta} \left( \frac{\log
(\Nbar/\Cbar)}{\log \mbar} \, + \,  \frac{\log \Cbar }{\log \nbar}\right)  .
\]
\end{enumerate}
To see why this is sufficient, first note that since the Assouad spectrum is a continuous function
in $\theta$, see \cite[Corollary 3.5]{Spectraa}, it is determined by its values on a countable dense
set and so the above statements imply the \emph{a priori} stronger statements that for $\mathbb{P}$
almost all $\omega \in \Omega$, we have the given estimates for \emph{all} $\theta$. Secondly,
since the Assouad spectrum necessarily approaches the quasi-Assouad dimension as $\theta \to 1$, (1)
demonstrates that the quasi-Assouad dimension is at most
\[
 \frac{\log \Bbar }{\log \mbar}  +   \frac{\log \Cbar}{\log \nbar}.
\]
and since (2) demonstrates that the Assouad spectrum attains this value at $ \theta = \log
\mbar/\log \nbar $, it follows from \cite[Corollary 3.6]{Spectraa} that it is constant in the
interval $[ \log \mbar/\log \nbar ,1)$.   Technically speaking \cite[Corollary 3.6]{Spectraa} proves
that if the Assouad spectrum is equal to  the \emph{Assouad} dimension at some $\theta' \in (0,1)$,
then it is constant in the interval $[\theta',1)$, but the same proof allows one to replace the
  Assouad dimension with the quasi-Assouad dimension in this statement.

Finally, note that to establish estimates for $\dim_{\mathrm{A}}^\theta F_\omega$, it suffices to
replace balls of radius $R$ with approximate $R$-squares in the definition.  That is, to estimate
$N\left(Q \cap F_\omega, R^{1/\theta} \right) $ where $Q$ is associated to $i_1, i_2, \dots $ with
$i_j \in \mathcal{I}_{\omega_j}$ for all $j$ instead of $N\left(B(x,R) \cap F_\omega, R^{1/\theta}
\right) $ for $x \in F_\omega$.  This is because balls and approximate squares are comparable and
one can pass  covering estimates concerning one to covering estimates concerning the other up to
constant factors.  This duality is standard and we do not go into the details.

\subsection{Covering estimates}

Let $\omega \in \Omega$, $\theta \in (0,1)$,  $R \in (0,1)$ and $Q$ be an approximate $R$-square
associated with the sequence $i_1, i_2, \dots $ with $i_j \in \mathcal{I}_{\omega_j}$ for all $j$.
In what follows we describe sets of the form $S_{j_1} \circ \cdots \circ S_{ j_{l}}(F_\omega)$ as
level $l$ \emph{cylinders} and level $(l+1)$ cylinders lying inside a particular level $l$ cylinder
will be referred to as \emph{children}.  Moreover, \emph{iteration} will refer to moving attention
from a particular cylinder, or collection of cylinders, to the cylinders at the next level. 

We wish to estimate $N\left(Q \cap F_\omega, R^{1/\theta} \right) $ and to do this we decompose
$Q \cap F_\omega$ into cylinders at level $k^\omega_2\left(R^{1/\theta}\right)$ and cover each
cylinder independently. Therefore we first need to count how many level
$k^\omega_2\left(R^{1/\theta}\right)$ cylinders lie inside $Q$.  There are two cases, which we
describe separately.  
\\ \\
  \emph{Case (i)}: $k^\omega_1(R) < k^\omega_2\left(R^{1/\theta}\right)$.\\
We start by noting that $Q$ lies inside a (unique) level $k^\omega_2(R) $ cylinder.  As we move to
the next level only the children of this cylinder lying in a particular `column' will also intersect
$Q$. Iterating inside cylinders intersecting $Q$ until level $k^\omega_1(R)$ yields a  decomposition
of $Q$ into several cylinders arranged in a single column each of which has base the same length as
that of $Q$.  The number of these cylinders is at most
\[
\prod_{l=k_2^\omega(R)+1}^{k^\omega_1(R)} C_{\omega_l}\,,
\]
since each iteration from the $(l-1)$th level to the $l$th multiplies the number of cylinders
intersecting $Q$ at the previous level by the number of rectangles in a particular column of
$\mathcal{I}_{\omega_l}$ system, which is, in particular, bounded above by $C_{\omega_l}$.  The
situation is simpler from this point on.  We continue to iterate inside each of the level
$k^\omega_1(R)$ cylinders until level $k^\omega_2\left(R^{1/\theta}\right)$, but this time all of
the children remain inside $Q$ at every iteration.  Therefore we find precisely
\[
\prod_{l=k^\omega_1(R)+1}^{k^\omega_2\left(R^{1/\theta}\right)}  N_{\omega_l} 
\]
level $k^\omega_2\left(R^{1/\theta}\right)$ cylinders inside each level $k^\omega_1(R)$ cylinder.
As mentioned above, we now cover each of these cylinders individually.  To do this, we further
iterate inside each such cylinder until level $k_1^\omega\left(R^{1/\theta}\right)$ and group
together cylinders at this level which lie in the same column.  This decomposes the level
$k_1^\omega\left(R^{1/\theta}\right)$ cylinders into approximate $R^{1/\theta}$ squares, each of
which can be covered by 4 balls of diameter $R^{1/\theta}$.  Therefore it only remains to count the
number of distinct level $k_1^\omega\left(R^{1/\theta}\right)$ columns inside a level
$k^\omega_2\left(R^{1/\theta}\right)$ cylinder.  Iterating from the $(l-1)$th level to the $l$th
level  multiplies the number of columns by $B_{\omega_l}$ and therefore the number is 
\[
\prod_{l=k^\omega_2\left(R^{1/\theta}\right)+1}^{k_1^\omega\left(R^{1/\theta}\right)}  B_{\omega_l} .
\]
Combining the three counting arguments from above yields
\begin{equation}\label{eq:upperCounting1}
N\left(Q \cap F_\omega, R^{1/\theta} \right)  \ \leq \ 4
 \left(\prod_{l=k_2^\omega(R)+1}^{k^\omega_1(R)} C_{\omega_l} \right) 
\ \left(\prod_{l=k^\omega_1(R)+1}^{k^\omega_2\left(R^{1/\theta}\right)}  N_{\omega_l} \right) 
\ \left( \prod_{l=k^\omega_2\left(R^{1/\theta}\right)+1}^{k_1^\omega\left(R^{1/\theta}\right)}  B_{\omega_l} \right) .
\end{equation}
Moreover, this estimate is sharp in the sense that we can always find a particular  approximate $R$-square $Q$ such that
\begin{equation}\label{eq:lowerCounting1}
N\left(Q \cap F_\omega, R^{1/\theta} \right)  \ \geq \ K
 \left(\prod_{l=k_2^\omega(R)+1}^{k^\omega_1(R)} C_{\omega_l} \right) 
\ \left(\prod_{l=k^\omega_1(R)+1}^{k^\omega_2\left(R^{1/\theta}\right)}  N_{\omega_l} \right) 
\ \left( \prod_{l=k^\omega_2\left(R^{1/\theta}\right)+1}^{k_1^\omega\left(R^{1/\theta}\right)}
B_{\omega_l} \right)\,,
\end{equation}
for some constant $K>0$ depending on $m_{\max}$ and $n_{\max}$.  Such a $Q$ is provided by any
approximate $R$-square where $T = S_{ i_1} \circ \cdots \circ S_{ i_{k_1^\omega(R)}}$ is chosen such
that each map $i_j$ lies in a maximal column of $\mathcal{I}_j$, that is a column consisting of
$C_j$ rectangles.  Finally, the small constant $K$ in the lower bound appears since a single ball of
diameter $R^{1/\theta}$ can only intersect at most a constant number of the approximate
$R^{1/\theta}$ squares found above and therefore counting approximate $R^{1/\theta}$ squares is
still comparable to counting optimal  $R^{1/\theta}$ covers.
\\ \\
  \emph{Case (ii)}: $k^\omega_1(R) \geq k^\omega_2\left(R^{1/\theta}\right)$.\\
The distinctive feature of this case is that when one iterates inside the  level $k^\omega_2(R) $
cylinder containing $Q$, one reaches the situation where the height of the cylinders is roughly
$R^{1/\theta}$ (level $k^\omega_2\left(R^{1/\theta}\right) $) \emph{before} the cylinders lie
completely inside $Q$ (level $k^\omega_1(R)$).  This means that the middle term in the above product
no longer appears.  The rest of the argument is similar, however, and we end up with
\begin{equation}\label{eq:upperCounting2}
N\left(Q \cap F_\omega, R^{1/\theta} \right)  \ \leq \ 4
 \left(\prod_{l=k_2^\omega(R)+1}^{k^\omega_2\left(R^{1/\theta}\right)} C_{\omega_l} \right) 
\ \left( \prod_{l=k^\omega_1\left(R\right)+1}^{k_1^\omega\left(R^{1/\theta}\right)}  B_{\omega_l} \right) .
\end{equation}
One subtle feature of this estimate is that we appear to skip from level
$k^\omega_2\left(R^{1/\theta}\right)$ to level $k^\omega_1\left(R\right)$.  This is to avoid
over-counting due to the fact that, inside a level $k^\omega_2\left(R^{1/\theta}\right)$  cylinder
intersecting $Q$, only a  single level $k^\omega_1\left(R\right)$ column actually lies inside $Q$,
and can thus contribute to the covering number.  This column comprises of several
$k^\omega_1\left(R\right)$ cylinders and, since the height of this column is comparable to
$R^{1/\theta}$, to cover this column efficiently one only needs to count the number of level
$k_1^\omega\left(R^{1/\theta}\right)$ columns inside a single level $k^\omega_1\left(R\right)$
cylinder.  This gives the second multiplicative term in the estimate, which concerns the terms
$B_{\omega_l}$.

Once again, this bound is sharp in the sense that there exists an approximate $R$-square $Q$ such that
\[
N\left(Q \cap F_\omega, R^{1/\theta} \right)  \ \geq \ K
 \left(\prod_{l=k_2^\omega(R)+1}^{k^\omega_2\left(R^{1/\theta}\right)} C_{\omega_l} \right) 
\ \left( \prod_{l=k^\omega_1\left(R\right)+1}^{k_1^\omega\left(R^{1/\theta}\right)}  B_{\omega_l} \right).
\]

\subsection{Proof of the Main Theorem}
We start our proof with this lemma, which is a simple variant of a Chernoff bound for stopped sums of
random variables.  We write $\mathbb{P}\{a \geq b\}$ to denote $\mathbb{P}\left( \left\{ \omega \in \Omega : a \geq b\right\} \right)$ and write $\mathbb{E}(\cdot)$ for the expectation of a random variable with respect to $\mathbb{P}$.
\begin{lma}\label{lma:randomUpperLemma}
  Let $\{X_i\}$ be a sequence of non-negative discrete i.i.d.\ random variables with finite expectation
  $0<\overline{X}=\E(X)<\infty$.
  Let $\widehat{k}\in\N$ and let $k\leq \widehat{k}$ be a random variable. Let $\tau>\widehat{k}$ be a stopping time
  with finite expectation. 
  Then, for all $\varepsilon,t>0$,
  \begin{equation}\label{eq:firstUpperEstimate}
    \mathbb{P}\left\{ \sum_{i=k}^{\tau}X_i \geq (1+\varepsilon)(\tau-k+1)\overline{X} \right\}
    \leq \E\left( \E\left( e^{t(X-(1+\varepsilon)\overline{X})} \right)^{\tau-k} \right)
  \end{equation}
  and
  \begin{equation}  
    \mathbb{P}\left\{ \sum_{i=k}^{\tau}X_i \leq (1-\varepsilon)(\tau-k+1)\overline{X} \right\}
    \leq \E\left( \E\left( e^{t(X-(1+\varepsilon)\overline{X})} \right)^{\tau-k} \right).
  \end{equation}
  Further, if 
  $\tau-k\geq l$ for some $l\in\N$, then there exists
  $0<\gamma<1$ not depending on $\tau,k,l$ such that
  \begin{equation}\label{eq:secondUpperEstimate}
    \mathbb{P}\left\{ \sum_{i=k}^{\tau}X_i \geq (1+\varepsilon)(\tau-k+1)\overline{X} \right\}
    \leq \gamma^{l}
  \end{equation}
  and
  \begin{equation}\nonumber
    \mathbb{P}\left\{ \sum_{i=k}^{\tau}X_i \leq (1-\varepsilon)(\tau-k+1)\overline{X} \right\}
    \leq \gamma^{l}.
  \end{equation}
\end{lma}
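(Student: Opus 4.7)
The statement is a stopping-time version of the standard exponential Chernoff bound, so the natural approach is to combine the exponential Markov inequality with the i.i.d.\ structure of $\{X_i\}$ and the stopping-time property of $\tau$. Write $M(t)=\mathbb{E}(e^{t(X-(1+\varepsilon)\overline{X})})$ for $t\geq 0$. Since $M(0)=1$ and $M'(0)=-\varepsilon\overline{X}<0$, there exists $t_{0}>0$ such that $M(t)<1$ for every $t\in(0,t_{0}]$; this strict inequality drives everything that follows.

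For \eqref{eq:firstUpperEstimate}, I apply the exponential Markov inequality with any $t>0$, which gives
\[
\mathbb{P}\!\left\{\sum_{i=k}^{\tau}X_i\geq(1+\varepsilon)(\tau-k+1)\overline{X}\right\}\leq\mathbb{E}\!\left[\prod_{i=k}^{\tau}e^{t(X_i-(1+\varepsilon)\overline{X})}\right].
\]
I would decompose the right-hand side by conditioning on $k$ (which takes only finitely many values since $k\leq\widehat{k}$) and using that $X_{\widehat{k}+1},X_{\widehat{k}+2},\dots$ are i.i.d.\ and independent of the natural filtration $\mathcal{F}_{\widehat{k}}=\sigma(X_1,\dots,X_{\widehat{k}})$. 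The product martingale $\prod_{i=\widehat{k}+1}^{n}e^{t(X_i-(1+\varepsilon)\overline{X})}/M(t)^{n-\widehat{k}}$ has constant expectation $1$, and since $\tau$ is a stopping time with finite expectation, an optional-stopping / dominated-convergence argument (justified once $M(t)\leq 1$ is in force) collapses the expectation to a constant multiple of $\mathbb{E}(M(t)^{\tau-k+1})$. Since $M(t)\leq 1$, this is bounded by the claimed $\mathbb{E}(M(t)^{\tau-k})$.

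For \eqref{eq:secondUpperEstimate}, fix $t$ once and for all so that $\gamma:=M(t)<1$. On $\{\tau-k\geq l\}$ one has $M(t)^{\tau-k}\leq\gamma^{l}$, and hence $\mathbb{E}(M(t)^{\tau-k})\leq\gamma^{l}$ by dominating the integrand by the constant $\gamma^{l}$. The lower-tail inequalities follow from the entirely analogous Chernoff estimate with a negative exponential parameter; the relevant moment generating function is again strictly less than $1$ for small positive parameter because $(1-\varepsilon)\overline{X}<\overline{X}$.

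The main obstacle in this plan is the asymmetry between the two endpoints of the product: $k$ is merely a bounded random variable (not necessarily a stopping time for $\{X_i\}$), while $\tau$ is an unbounded stopping time. My remedy is to split the product at the deterministic threshold $\widehat{k}$, handling the finite prefix $\prod_{i=k}^{\widehat{k}}$ by boundedness together with $M(t)\leq 1$, and treating the tail $\prod_{i=\widehat{k}+1}^{\tau}$ by optional stopping applied to the product martingale. Finite expectation of $\tau$ ensures uniform integrability of the stopped product martingale once $M(t)\leq 1$ is imposed, so the passage to the limit in the optional-stopping step is rigorous.
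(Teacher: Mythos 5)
Your proposal is correct and follows essentially the same route as the paper: exponential Markov (Chernoff) applied to the stopped sum, factorisation of the resulting expectation using independence and the stopping-time property, and then the observation that $M(0)=1$ with $M'(0)=-\varepsilon\overline{X}<0$ yields a $t>0$ with $\gamma=M(t)<1$, whence \eqref{eq:secondUpperEstimate} follows from $\tau-k\geq l$. The only cosmetic differences are that the paper factorises the stopped product by iterated conditioning on $\mathcal{F}_{\tau-1},\mathcal{F}_{\tau-2},\dots$ rather than by optional stopping of the product martingale, and that your derivation of \eqref{eq:firstUpperEstimate} implicitly restricts to $t$ with $M(t)\leq 1$, which is all that is needed for the second estimate and the applications in the paper.
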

\begin{proof}
  In what follows we write $\{\mathcal{F}_s\}_{s \geq 0}$ for the natural filtration of our event
  space.  We prove \eqref{eq:firstUpperEstimate} and \eqref{eq:secondUpperEstimate}.  The remaining
  estimates are proved similarly and we omit the details.  We rearrange the left hand side of 
  \eqref{eq:firstUpperEstimate} and multiply by $t>0$ to obtain
  \begin{align*}
    \mathbb{P}\left\{ \sum_{i=k}^{\tau}X_i \geq (1+\varepsilon)(\tau-k+1)\overline{X} \right\} &= 
    \mathbb{P}\left\{\sum_{i=k}^{\tau}t(X_i-(1+\varepsilon)\overline{X}) \geq 0 \right\}\\
    &= \mathbb{P}\left\{ \exp\left[ \sum_{i=k}^{\tau}Y_i \right]\geq 1 \right\},
    \intertext{with $Y_i=t X_i- t(1+\varepsilon)\overline{X}$. Using Markov's inequality and continuing,}
    &\leq \E\left(\exp\left[\sum_{i=k}^{\tau}Y_i\right] \right)\\
    &= \E\left( \E\left( \exp \left[ \sum_{i=k}^\tau Y_i \right] \Big| \mathcal{F}_{\tau-1}\right)
    \right)\\
    & = \E\left( \E\left( \exp Y_\tau \mid \mathcal{F}_{\tau-1}\right)\E\left( \exp\left[ \sum_{i=k}^{\tau-1}Y_i \right]
    \Big| \mathcal{F}_{\tau-1} \right) \right)\\
    & = \E \left( \E\left( \exp Y_0 \right)\E \left( \E\left( \exp\left[ \sum_{i=k}^{\tau-1}Y_i
    \right]\Big| \mathcal{F}_{\tau-2} \right)\Big|\mathcal{F}_{\tau-1} \right) \right)\\
    & = \E \left( \E\left( \exp Y_0 \right)^2 \E \left( \E\left( \exp\left[ \sum_{i=k}^{\tau-2}Y_i
    \right]\Big| \mathcal{F}_{\tau-2} \right)\Big|\mathcal{F}_{\tau-1} \right) \right)\\
    &  \vdots\\
    & = \E \left( \E\left( \exp Y_0 \right)^{\tau-k}  \right)\\
    & = \E\left( \E\left( e^{t(X-(1+\varepsilon)\overline{X})} \right)^{\tau-k} \right)\,,
  \end{align*}
  as required.   

To prove \eqref{eq:secondUpperEstimate}  we consider 
  \begin{equation*}
    \gamma_t = \E\left( e^{t\left( X_0-(1+\varepsilon)\overline{X} \right)} \right).
  \end{equation*}
  Since $X$ is discrete, we can differentiate with respect to $t$ for all $t\in\R$, and get
  \begin{align*}
    \frac{d}{dt}\E\left( e^{t\left( X_0-(1+\varepsilon)\overline{X} \right)}
    \right)\Big\rvert_{t=0}
    &= \E\left( \frac{d}{dt}  e^{t\left( X_0-(1+\varepsilon)\overline{X} \right)}
    \right)\Big\rvert_{t=0}\\ \\
    &= 
    \E\left( \left( X_0-(1+\varepsilon)\overline{X} \right) e^{t\left( X_0-(1+\varepsilon)\overline{X} \right)}
    \right)\Big\rvert_{t=0}\\
    & = \E\left( X_0-(1+\varepsilon)\overline{X} \right)=-\varepsilon \overline{X}<0.
  \end{align*}
  Thus, since $\gamma_0=1$, there exists $t>0$ such that $0<\gamma_t<1$. Note that $t$ (and thus $\gamma_t$) does not depend on
  on $\tau,k,l$ and we can now use \eqref{eq:firstUpperEstimate} together with the assumption that $\tau-k \geq l$ to
  obtain \eqref{eq:secondUpperEstimate}, where $\gamma=\gamma_t$.
\end{proof}

Note that from the definitions
of $k_1^\omega(R)$ and $k_2^\omega(R)$ we can conclude that there exist constants
$c_1,c_\theta>1$ such that for sufficiently small $R$,
\[
  k_1^\omega(R) \geq c_1 k_2^\omega(R), \quad k_1^\omega(R^{1/\theta})\geq c_\theta k_1^\omega (R),
  \quad\text{ and }\quad k_2^\omega(R^{1/\theta}) \geq c_\theta k_2^\omega(R).
\]
The relationship between $k_1^\omega(R)$ and $k_2^\omega(R^{1/\theta})$ is more complicated and
depends heavily on $\omega$ and $R$. However, probabilistically we can say more.
Let $\varepsilon>0$ and $q\in\N$. 
Note that, taking logarithms,
\[
  \mathbb{P}\left\{ \prod_{i=1}^q n_{\omega_i}^{-1} \leq (\nbar)^{-(1+\varepsilon)q} \right\}
  =\mathbb{P} \left\{ \sum_{i=1}^q \log n_{\omega_i} \geq (1+\varepsilon)q\log \nbar \right\} 
\]
and therefore, by Lemma~\ref{lma:randomUpperLemma}, there exists $0<\gamma<1$
such that
\[
  \mathbb{P}\left\{ \prod_{i=1}^q n_{\omega_i}^{-1} \leq (\nbar)^{-(1+\varepsilon)q} \right\}\leq
  \gamma^{q-1}.
\]
Now, summing over $q$, we obtain
\[
  \sum_{q=1}^\infty \mathbb{P}\left\{ \prod_{i=1}^q n_{\omega_i}^{-1} \leq (\nbar)^{-(1+\varepsilon)q} \right\}\leq
  \sum_{q=1}^\infty \gamma^{q-1} < \infty.
\]
Thus, by the Borel-Cantelli Lemma, almost surely there are at most finitely many $q$ such these
events occur. We can similarly argue for a lower bound and conclude that for almost
all $\omega\in\Omega$ there exists
$q_\omega$ such that,
\begin{equation}\label{eq:nproduct}
  (\nbar)^{-(1+\varepsilon) q}\leq \prod_{i=1}^q n_{\omega_i}^{-1} \leq
  (\nbar)^{-(1-\varepsilon) q},
\end{equation}
for all $q\geq q_\omega$.
Analogously,
\begin{equation}\label{eq:mproduct}
  (\mbar)^{-(1+\varepsilon) q}\leq \prod_{i=1}^q m_{\omega_i}^{-1} \leq
  (\mbar)^{-(1-\varepsilon) q},
\end{equation}
almost surely for all $q$ large enough. Without loss of generality we can assume $q_\omega$ to be
identical for both products.
Since $k_2^\omega(R)\geq -c\log R$ for some $c>0$ not depending on $\omega,R$, we see that there
almost surely also exists an $R_\omega$ such that \eqref{eq:nproduct} and \eqref{eq:mproduct} hold
for all $q\geq k_2^\omega(R)$, where $0<R\leq R_\omega$.

Given these bounds we can determine the probabilistic relationship between
$k_1^\omega(R)$ and $k_2^\omega(R^{1/\theta})$.
Let $R_\omega$ be as above. Then by the definitions of $k_1^\omega(R)$ and
$k_2^\omega(R^{1/\theta})$ we get, for all $R\leq R_\omega$,
\[
  (\mbar)^{-(1+\varepsilon)k_1^\omega(R)}\leq\prod_{i=1}^{k_1^\omega(R)}
  m_{\omega_i}^{-1}\leq R < n_{\max}^\theta\left(
  \prod_{i=1}^{k_2^\omega(R^{1/\theta})} n_{\omega_i}^{-1} \right)^{\theta}
  \leq n_{\max}^\theta (\nbar)^{-\theta(1-\varepsilon) k_2^\omega(R^{1/\theta})}
\]
and, after rearranging,
\begin{equation}\label{eq:firstkRatio}
  \frac{k_1^\omega(R)}{k_2^\omega(R^{1/\theta})} > \theta \frac{1-\varepsilon}{1+\varepsilon}
  \frac{\log\nbar}{\log\mbar}-\theta\frac{\log
    n_{\max}}{(1+\varepsilon)\,k_2^\omega(R^{1/\theta})\log\mbar}.
  \end{equation}
Similarly, by considering the complementary inequalities
\[
\left(\prod_{i=1}^{k_2^\omega(R^{1/\theta})}
  n_{\omega_i}^{-1}\right)^\theta \leq R < m_{\max} 
  \prod_{i=1}^{k_1^\omega(R)} m_{\omega_i}^{-1} \,,
\]
we obtain
\begin{equation}\label{eq:secondkRatio}
  \frac{k_1^\omega(R)}{k_2^\omega(R^{1/\theta})}< \theta  \frac{1+\varepsilon}{1-\varepsilon}
  \frac{\log\nbar}{\log\mbar}-\frac{\log
    m_{\max}}{(1-\varepsilon)\,k_2^\omega(R^{1/\theta})\log\mbar}.
\end{equation}

Now $\varepsilon>0$ was arbitrary and the last term in \eqref{eq:firstkRatio} and
\eqref{eq:secondkRatio} vanishes as $R_\omega$ decreases. Therefore, for all $\delta>0$ and almost
all $\omega\in\Omega$, there exists sufficiently small $R_\omega>0$ such that
\begin{equation}\label{eq:limitRatio1}
  (1-\delta)\frac{\theta\log\nbar}{\log\mbar}
  \leq \frac{k_1^\omega(R)}{k_2^\omega(R^{1/\theta})}\leq
  (1+\delta)\frac{\theta\log\nbar}{\log\mbar}\,,
\end{equation}
for all $R<R_\omega$.
Moreover, using the much simpler relationships derived above, we can assume without loss of generality that $R_\omega$ is small enough such that 
\begin{equation}\label{eq:limitRatio2}
  (1-\delta)\frac{\log\nbar}{\log\mbar}\leq\frac{k_1^\omega(R)}{k_2^\omega(R)}\leq
  (1+\delta)\frac{\log\nbar}{\log\mbar},
\end{equation}
\begin{equation}\label{eq:limitRatio3}
  (1-\delta)\theta \leq \frac{k_1^\omega(R)}{k_1^\omega(R^{1/\theta})} \leq (1+\delta)\theta
  \quad\text{and}\quad
  (1-\delta)\theta \leq \frac{k_2^\omega(R)}{k_2^\omega(R^{1/\theta})} \leq (1+\delta)\theta
\end{equation}
all hold simultaneously for all $R<R_\omega$.

\subsubsection{The upper bound for $\theta< \log\mbar/\log\nbar$}
We assume throughout that $R_\omega$ is small enough for all inequalities in the last section to
hold simultaneously (almost surely).  Also, let $\delta>0$ be small enough such that the inequalities at the end of the previous section are all bounded away from $1$. That is we choose $\delta>0$ such that $(1+\delta)\theta<1$, $(1-\delta)\frac{\log\nbar}{\log\mbar}>1$ and, especially relevent to this section, \eqref{eq:limitRatio1} and $\theta< \log\mbar/\log\nbar$ imply that we can choose
$\delta>0$ sufficiently small such that  $k_1^\omega(R)<k_2^\omega(R^{1/\theta})$ almost surely for all $R<R_\omega$.

Let $\varepsilon>0$ and consider the geometric average given by
\begin{equation}\label{eq:firstAverage}
  \left(\Cbar^{k_1^\omega(R)-k_2^\omega(R)}\;\Nbar^{k_2^\omega(R^{1/\theta})-k_1^\omega(R)}
  \;\Bbar^{k_1^\omega(R^{1/\theta})-k_2^\omega(R^{1/\theta})}\right)^{1+\varepsilon}.
\end{equation}
We want to determine the probability that there exists an approximate $R$ square at a given level such that we need
more than the estimate in \eqref{eq:firstAverage} many $R^{1/\theta}$ squares to cover it.
Note that for \eqref{eq:upperCounting1} to be larger than \eqref{eq:firstAverage}, at least one of
the products must exceed the corresponding power of the average.  Therefore,
\begin{multline}
  \mathbb{P}\left\{ N\left( Q\cap F_\omega, R^{1/\theta} \right) \geq 4
  \left(\Cbar^{k_1^\omega(R)-k_2^\omega(R)}\;\Nbar^{k_2^\omega(R^{1/\theta})-k_1^\omega(R)}
  \;\Bbar^{k_1^\omega(R^{1/\theta})-k_2^\omega(R^{1/\theta})}\right)^{1+\varepsilon}
   \right\}
   \\
   \leq
   \mathbb{P}\left\{ \left(\prod_{l=k_2^\omega(R)+1}^{k^\omega_1(R)} C_{\omega_l} \right)\geq 
 \Cbar^{(1+\varepsilon)(k_1^\omega(R)-k_2^\omega(R))}\right\}
 +
  \mathbb{P}\left\{ \left(\prod_{l=k^\omega_1(R)+1}^{k^\omega_2\left(R^{1/\theta}\right)}  N_{\omega_l} \right) \geq 
 \Nbar^{(1+\varepsilon)(k_2^\omega(R^{1/\theta})-k_1^\omega(R))}\right\}
 \\+
\mathbb{P}\left\{\left( \prod_{l=k^\omega_2\left(R^{1/\theta}\right)+1}^{k_1^\omega\left(R^{1/\theta}\right)} 
B_{\omega_l} \right) \geq 
\Bbar^{(1+\varepsilon)(k_1^\omega(R^{1/\theta})-k_2^\omega(R^{1/\theta}))}\right\}.
\end{multline}
Let us start by analysing the event involving $C_{\omega_l}$. We want to show that the product can
only exceed the average behaviour at most finitely many times almost surely.  That is, given $q\in\N$, 
we want to estimate 
\begin{equation} \label{event111}
  \mathbb{P}\left\{    \prod_{l=k_2^\omega(R)+1}^{k^\omega_1(R)} C_{\omega_l} \geq 
  \Cbar^{(1+\varepsilon)(k_1^\omega(R)-k_2^\omega(R))}  \text{ for some $R \in (0,R_\omega)$ such that } k_2^\omega(R) = q \right\}.
\end{equation}
Notice that $k_1^\omega(R)$ is a stopping time and, by \eqref{eq:limitRatio2} and our assumption that $R_\omega$ and $\delta$ are chosen sufficiently small, $k_1^\omega(R) \geq
(1-\delta)\log\nbar/\log\mbar \ k_2^\omega(R)$ and $c:=(1-\delta)\log\nbar/\log\mbar -1 >0$.  
Using Lemma~\ref{lma:randomUpperLemma}, we can bound \eqref{event111} above by 
\begin{align*}
 \mathbb{P}\left\{  \bigcup_{\substack{q': \exists R \in (0,R_\omega) \\ k_2^\omega(R) = q \text{ and } k_1^\omega(R) = q'}}  \left\{ \sum_{l=q+1}^{q'} \log C_{\omega_l}  \geq
  (1+\varepsilon)(q'-q)\log\Cbar  \right\} \right\}
  \leq L\gamma^{c(q-1)}\,,
\end{align*}
for some $0<\gamma<1$ where $L>0$ is a deterministic constant corresponding to the number of
possible values for $k_1^\omega(R)$, given $k_2^\omega(R)$. Since
\[
  \sum_{q=1}^\infty  L \gamma^{c(q-1)} < \infty\,,
\]
the Borel-Cantelli lemma implies that the product can exceed the average behaviour only finitely many times almost surely.
The argument for $N_{\omega_l}$ and $B_{\omega_l}$ is identical due to the ratios given  
in \eqref{eq:limitRatio1}, \eqref{eq:limitRatio2}, and \eqref{eq:limitRatio3}. Therefore, there
almost surely
exists $q$ large enough---and hence $R_\omega'$ small enough---such that 
\[
  N\left( Q\cap F_\omega, R^{1/\theta} \right) \leq 4
  \left(\Cbar^{k_1^\omega(R)-k_2^\omega(R)}\;\Nbar^{k_2^\omega(R^{1/\theta})-k_1^\omega(R)}
  \;\Bbar^{k_1^\omega(R^{1/\theta})-k_2^\omega(R^{1/\theta})}\right)^{1+\varepsilon}\,,
\]
for all $0<R<R_\omega'$. Using \eqref{eq:limitRatio1}, \eqref{eq:limitRatio2}, and
\eqref{eq:limitRatio3} again, we obtain 
\[
  k_1^\omega(R)-k_2^\omega(R)\leq \left(
  (1+\delta)\frac{\log\nbar}{\log\mbar}-1\right)k_2^\omega(R)
\]
\[
  k_2^\omega(R^{1/\theta})-k_1^\omega(R) \leq \left(\frac{1+\delta}{1-\delta}\theta^{-1}  -
  (1+\delta)\frac{\log\nbar}{\log\mbar} \right)k_2^\omega(R)
\]
and
\[
  k_1^\omega(R^{1/\theta})-k_2^\omega(R^{1/\theta}) \leq \left(
  \frac{1+\delta}{1-\delta}\frac{\log\nbar}{\log\mbar} - (1-\delta)^{-1}
  \right)\theta^{-1}k_2^\omega(R).
\]
Now, using $k_2^\omega(R) \leq -(1+\delta)\log R / \log\nbar$, we rearrange,
\[
  \Cbar^{k_1^\omega(R)-k_2^\omega(R)}\leq
  \Cbar^{-(1+\delta)^2\log R / \log\mbar - (-(1+\delta)\log R /\log\nbar)} =
  R^{(1-1/\theta)s_c},
\]
where
\[
  s_c =
  (1+\delta)^2\frac{\log\Cbar}{\log\mbar}\frac{\theta}{1-\theta}-(1+\delta)\frac{\log\Cbar}{\log\nbar}\frac{\theta}{1-\theta}
  \quad
  \to
  \quad
  s_C := \frac{\theta}{1-\theta}\left(\frac{\log\Cbar}{\log\mbar} - 
  \frac{\log\Cbar}{\log\nbar}
  \right),\quad\text{as}\quad \delta\to0.
\]
We rearrange the other terms similarly  to obtain
\[
  N\left( Q\cap F_\omega, R^{1/\theta} \right) \leq 4 R^{(1-1/\theta)(1+\varepsilon)(s_c+s_n+s_b)},
\]
where 
\[
  s_n = -(1+\delta)^2 \frac{\log\Nbar}{\log\mbar}\frac{\theta}{1-\theta} +
  \frac{(1+\delta)^2}{1-\delta}\frac{\log\Nbar}{\log\nbar}\frac{1}{1-\theta}
  \quad
  \to
  \quad
  s_N:=\frac{1}{1-\theta}\left(\frac{\log\Nbar}{\log\nbar}-\theta
  \frac{\log\Nbar}{\log\mbar}
  \right),\quad\text{as}\quad \delta\to0,
\]
and
\[
  s_b =
  -\frac{1+\delta}{1-\delta}\frac{\log\Bbar}{\log\nbar}\frac{1}{1-\theta}+
  \frac{(1+\delta)^2}{1-\delta} \frac{\log\Bbar}{\log\mbar}\frac{1}{1-\theta}
  \quad
  \to
  \quad
  s_B:=\frac{1}{1-\theta}\left(
  \frac{\log\Bbar}{\log\mbar}-\frac{\log\Bbar}{\log\nbar}
  \right),\quad\text{as}\quad \delta\to0.
\]
For arbitrary  $\varepsilon'>0$ we may assume  $\delta>0$ is small enough such that $s_c+s_n+s_b \leq
(1+\varepsilon')(s_C+s_N+s_B)$. Note that
\begin{equation}\label{eq:dimensionCase1}
  s:=s_C+s_N+s_B = \frac{1}{1-\theta}\left[ 
    \left( \frac{\log\Bbar}{\log\mbar}+\frac{\log\Nbar/\Bbar}{\log\nbar}
    \right)
    - \theta\left(
    \frac{\log\Nbar/\Cbar}{\log\mbar}+
  \frac{\log\Cbar}{\log\nbar} \right) \right].
\end{equation}
We can therefore conclude that, almost surely, every approximate square of length $R<R_\omega$ can be covered by fewer than
\begin{equation}\nonumber
 4 R^{(1-1/\theta)(1+\varepsilon)(1+\varepsilon')s}
\end{equation}
sets of diameter $R^{1/\theta}$. Thus the Assouad spectrum is bounded above by
$(1+\varepsilon)(1+\varepsilon')s$ and by the arbitrariness of $\varepsilon,\varepsilon'$, also by
$s$.
\qed

\subsubsection{The upper bound for $\theta> \log\mbar/\log\nbar$}
The proof for this case follows along the same lines as $\theta < \log\mbar/\log\nbar$ and
we will only sketch their differences.
First note that $\theta> \log\mbar/\log\nbar$ implies the almost sure existence of a small enough
$R_\omega$ such that 
\[
  k_1^\omega(R)\geq (1-\delta)\theta \frac{\log\nbar}{\log\mbar} k_2^\omega(R^{1/\theta})\,,
\]
for all $R<R_\omega$. Again we assume without loss of generality that $R_\omega$ is chosen such that
\eqref{eq:limitRatio1}, \eqref{eq:limitRatio2}, and \eqref{eq:limitRatio3} are satisfied for a given
$\delta>0$.  We also choose $\delta>0$ small enough to ensure that
\[
(1-\delta)\theta \frac{\log\nbar}{\log\mbar} > 1.
\]
Let $\varepsilon>0$ and consider the geometric average
\[
  \left( \Cbar^{k_2^\omega(R^{1/
  \theta})-k_2^\omega(R)}\Bbar^{k_1^\omega(R^{1/\theta})-k_1^\omega(R)}
  \right)^{1+\varepsilon}.
\]
We compare the upper bound given in \eqref{eq:upperCounting2} with the average above and obtain
\begin{multline}
  \mathbb{P}\left\{ N\left( Q\cap F_\omega, R^{1/\theta} \right) \geq 4
  \left(\Cbar^{k_2^\omega(R^{1/\theta})-k_2^\omega(R)}  
  \;\Bbar^{k_1^\omega(R^{1/\theta})-k_1^\omega(R)}\right)^{1+\varepsilon}
   \right\}
   \\
   \leq
   \mathbb{P}\left\{ \left(\prod_{l=k_2^\omega(R)+1}^{k^\omega_2(R^{1/\theta})} C_{\omega_l} \right)\geq 
   \Cbar^{(1+\varepsilon)(k_2^\omega(R^{1/\theta})-k_2^\omega(R))}\right\}
 +
\mathbb{P}\left\{\left( \prod_{l=k^\omega_1\left(R\right)+1}^{k_1^\omega\left(R^{1/\theta}\right)} 
B_{\omega_l} \right) \geq 
\Bbar^{(1+\varepsilon)(k_1^\omega(R^{1/\theta})-k_1^\omega(R))}\right\}.
\nonumber
\end{multline}

Now using the same ideas as before, noting that $k_1^\omega(\cdot)$ and $k_2^\omega(\cdot)$ are stopping times, 
we can conclude that for almost every $\omega\in\Omega$ there exists $R_\omega$ such that 
\[
 N\left( Q\cap F_\omega, R^{1/\theta} \right) \leq 4 \left( \Cbar^{k_2^\omega(R^{1/
  \theta})-k_2^\omega(R)}\Bbar^{k_1^\omega(R^{1/\theta})-k_1^\omega(R)}
  \right)^{1+\varepsilon}\,,
\]
for all $R<R_\omega$. Using the estimates for $k_1^\omega(R^{1/\theta})/k_1^\omega(R)$ and
$k_2^\omega(R^{1/\theta})/k_2^\omega(R)$ in \eqref{eq:limitRatio3} we see that there exists
$\varepsilon'>0$ such that for sufficiently small $R$,
\[
  \left( \Cbar^{k_2^\omega(R^{1/
  \theta})-k_2^\omega(R)}\Bbar^{k_1^\omega(R^{1/\theta})-k_1^\omega(R)}
  \right)^{1+\varepsilon} \leq R^{(1-1/\theta)(1+\varepsilon)(1+\varepsilon')s},
\]
where
\[
  s=\frac{\log\Bbar}{\log\mbar}+\frac{\log\Cbar}{\log\nbar}.
\]
As before, this is sufficient to prove that for $\theta>\log\mbar/\log\nbar$, there
almost surely exists $R_\omega$ such that  all approximate $R$-squares with $R< R_\omega$ can be covered by fewer than 
\[
  4
  R^{(1-1/\theta)(1+\varepsilon)(1+\varepsilon')s}
\]
 sets of diameter $R^{1/\theta}$. This proves that $\dim_\textup{A}^\theta F_\omega \leq
(1+\varepsilon)(1+\varepsilon')s$ almost surely, and hence, by arbitrariness of
$\varepsilon,\varepsilon'>0$ that $\dim_\textup{A}^\theta F_\omega \leq
s$ almost surely, as required. \qed

\subsubsection{The lower bound for $\theta< \log\mbar/\log\nbar$}

To prove almost sure  lower bounds for $\dim_\textup{A}^\theta F_\omega$ we need to show that almost
surely there exists a sequence $R_i \to 0$ such that for each $i$ there is an approximate
$R_i$-square which requires at least a certain number of sets of diameter $R_i^{1/\theta}$ to cover
it.

 Let $\theta< \log\mbar/\log\nbar$ and, as before, we choose $\delta>0$  small enough such that 
$k_1^\omega(R)<k_2^\omega(R^{1/\theta})$ almost surely for all small enough $R$.   Let $\varepsilon>0$, and given  $q\in\N$ and $\omega \in \Omega$, let
\[
R_q = \prod_{l=1}^{q} n_{\omega_l}^{-1}\,,
\]
noting that $k_2^\omega(R_q)=q$ and $R_q \to 0$ as $q \to \infty$.  We have
\begin{multline}\label{eq:lowerProbBound}
  \mathbb{P}\Bigg\{ N\left( Q\cap F_\omega, R_q^{1/\theta} \right) \geq K
  \left(\Cbar^{k_1^\omega(R_q)-k_2^\omega(R_q)}\;\Nbar^{k_2^\omega(R_q^{1/\theta})-k_1^\omega(R_q)}
  \;\Bbar^{k_1^\omega(R_q^{1/\theta})-k_2^\omega(R_q^{1/\theta})}\right)^{1-\varepsilon}    \Bigg\}
   \\
   \geq 1-
   \mathbb{P}\left\{ \prod_{l=k_2^\omega(R_q)+1}^{k^\omega_1(R_q)} C_{\omega_l} \leq 
   \Cbar^{(1-\varepsilon)(k_1^\omega(R_q)-k_2^\omega(R_q))}\quad\text{ or }\quad
   \prod_{l=k^\omega_1(R_q)-1}^{k^\omega_2\left(R_q^{1/\theta}\right)}  N_{\omega_l}   \leq
 \Nbar^{(1-\varepsilon)(k_2^\omega(R_q^{1/\theta})-k_1^\omega(R_q))}\right.
 \\
 \left. \text{or}\quad \prod_{l=k^\omega_2\left(R_q^{1/\theta}\right)+1}^{k_1^\omega\left(R_q^{1/\theta}\right)} 
B_{\omega_l}   \leq
\Bbar^{(1-\varepsilon)(k_1^\omega(R_q^{1/\theta})-k_2^\omega(R_q^{1/\theta}))} \right\}.
\end{multline}
The last term is bounded above by
\begin{multline}\nonumber
  \mathbb{P}\left\{ \prod_{l=k_2^\omega(R_q)+1}^{k^\omega_1(R_q)} C_{\omega_l} \leq 
   \Cbar^{(1-\varepsilon)(k_1^\omega(R_q)-k_2^\omega(R_q))}\right\}\\
 +\mathbb{P}\left\{ \prod_{l=k^\omega_1(R_q)-1}^{k^\omega_2\left(R_q^{1/\theta}\right)}  N_{\omega_l}   \leq
 \Nbar^{(1-\varepsilon)(k_2^\omega(R_q^{1/\theta})-k_1^\omega(R_q))}\right\}\\
 +\mathbb{P}\left\{ \prod_{l=k^\omega_2\left(R_q^{1/\theta}\right)+1}^{k_1^\omega\left(R_q^{1/\theta}\right)} 
B_{\omega_l}  \leq
\Bbar^{(1-\varepsilon)(k_1^\omega(R_q^{1/\theta})-k_2^\omega(R_q^{1/\theta}))}\right\}
\end{multline}
and by Lemma~\ref{lma:randomUpperLemma} and the union estimate used above each probability is
bounded above by $L' \gamma^{c'q}$ for some constants $L', c'>0$ and $\gamma \in (0,1)$.  Thus,
there exists $q_0$ such that each term in the sum is bounded by $1/6$ for $q\geq q_0$ and thus
the probability on the left hand side of \eqref{eq:lowerProbBound} is bounded below by $1/2$ for $q\geq q_0$.

Denote the event on the left hand side of \eqref{eq:lowerProbBound} by $E_q$. Observe that the event
only depends on the values of $\omega_i$ for $i$ satisfying $q= k_2^\omega(R_q) \leq i \leq  k_1^\omega(R_q^{1/\theta})$ as
the latter bound is a stopping time. By virtue of construction, there exists an integer $d \geq 1$ such that $k_1^\omega\left(R_{d^iq}^{1/\theta}\right)< d^{i+1} q$ for all $q$. Therefore the events $\{E_q, E_{dq}, E_{d^2 q},\dots\}$ are pairwise independent. Further, by the above argument,
\[
  \sum_{i=0}^{\infty}\mathbb{P}(E_{d^i q_0})\geq \sum_{i=0}^{\infty} 1/2 =\infty
\]
and so by the Borel-Cantelli Lemmas $E_q$ happens infinitely often.  Therefore, adapting the
argument involving $s_c, s_b$ and $s_b$ from above,  we have proved that for all $\varepsilon'>0$
there almost surely exist infinitely many $q \in \mathbb{N}$ such that there exists an approximate
$R_q$-square $Q$ such that
\[
 N\left( Q\cap F_\omega, R_q^{1/\theta} \right) \geq K R_q^{(1-1/\theta)(1-\varepsilon') s},
\]
where $s=s_C+s_B+s_N$ is the target lower bound for the spectrum.  This completes the proof.  \qed

\section*{Acknowledgements}
This work was started while both authors were  resident at the  Institut Mittag-Leffler during the
2017  semester programme \emph{Fractal Geometry and Dynamics}.  They are grateful for the
stimulating environment.  Much of the work was subsequently carried out whilst JMF visited the
University of Waterloo in March 2018. He is grateful for the financial support, hospitality, and
inspiring research atmosphere.    \\

\end{document}